\newtheorem{theorem}{Theorem}
\newtheorem{lemma}{Lemma}
\newtheorem{remark}{Remark}
\newcommand{\R}{\mathbb{R}}
\newcommand{\C}{\mathbb{C}}
\renewcommand{\H}{\mathbb{H}}
\renewcommand{\O}{\mathbb{O}}
\newcommand \<{\langle}
\renewcommand \>{\rangle}
\newcommand{\SO}{\mathrm{SO}}
\newcommand{\Sp}{\mathrm{Sp}}
\newcommand{\Spin}{\mathrm{Spin}}
\newcommand{\SU}{\mathrm{SU}}
\newcommand{\ri}{\mathrm{i}}
\newcommand{\weg}[1]{}
\newcommand{\const}{\textrm{const}}
\title{On Separation of Variables for  Symmetric Spaces of Rank~1}
\author{Alexey Bolsinov\footnote{Department of Mathematical Sciences,
 Loughborough University,
 LE11 3TU, UK  and Institute of Mathematics and Mathematical Modeling, Almaty, Kazakhstan\ \ \quad {\tt A.Bolsinov@lboro.ac.uk}}, Holger R.~Dullin\footnote{School of Mathematics and Statistics, University of Sydney, Australia \ \ \quad {\tt Holger.Dullin@sydney.edu.au}}, Vladimir. S. Matveev\footnote{
Institut f\"ur Mathematik, Friedrich Schiller Universit\"at Jena,
07737 Jena,  Germany \ \quad ORCID: 0000-0002-2237-1422 \ \ \quad {\tt  vladimir.matveev@uni-jena.de}}, Yury Nikolayevsky\footnote{Department of Mathematical and Physical Sciences, La Trobe University, Melbourne, Victoria, 3086, Australia
\ \ \quad {\tt  y.nikolayevsky@latrobe.edu.au}}}
\date{March 2025}
\begin{document}

\maketitle

\begin{abstract}
    We study existence and nonexistence of diagonal and separating coordinates for Riemannian  symmetric spaces of rank~1.  We generalize the results of Gauduchon and Moroianu, 2020, by showing that a symmetric space of rank~1 has diagonal coordinates if and only if it has constant sectional curvature. This implies that orthogonal separation of variables on a symmetric space of rank 1 is possible only in the constant sectional curvature case. We show that on the complex projective space $\C P^n$  and on complex hyperbolic space $\C H^n$, with $n\ge 2$, separating coordinates necessarily have precisely $n$ ignorable coordinates. 
    In view of results of Boyer et al, 1983 and 1985, and later results of Winternitz et al, 1994,  this completes the description of separation of variables on $\C P^n$ for all $n$ and on  $\C H^n$ for $n=2,3$. 

    {\bf MSC 2000:}  	53C35\,,  	32M15\,,   	37J11\,, 	37J35\,, 37J30\,, 53D20\,, 	70H06\,, 70H15\,, 70H20.

\end{abstract}

\section{Introduction}
All geodesics on Riemannian  compact rank~1 symmetric spaces are closed and have the same length. The three families of such manifolds are the standard sphere $S^n$,  the complex projective  space $\C P^n$, and the quaternionic projective  space $\H P^n$ corresponding to the three associative division algebras $\R$, $\C$, and Hamilton's quaternions~$\H$.  The final isolated example is $\O P^2$ related to non-associative octonions. \weg{ $\C P^n$ and $\H P^n$ can be   obtained by Hopf fibrations as
\begin{align*}
     S^{2n+1}/S^1 & = \C P^n, \\
     S^{4n+3}/S^3 &=  \H P^n \,.
\end{align*} }
As symmetric spaces they are given by the following quotients
\begin{align*}
    \SO(n+1)/\SO(n) & =   S^n, \\
    \SU(n+1)/\mathrm{S}( \mathrm{U}(n) \times \mathrm{U}(1) ) & =  \C P^n, \\
    \Sp(n+1)/(\Sp(n) \times \Sp(1)) & =  \H P^n, \\
    \mathrm{F}_4/\Spin(9) & =  \O P^2 \,.
\end{align*}
\weg{The standard metric for the complex and quaternionic projective spaces is called the Fubini-Study metric; it is  constructed from homogeneous coordinates in $\C^{n+1}$ respectively $\H^{n+1}$.}

These spaces have  noncompact   twins, which are rank~1 Riemannian symmetric spaces of negative curvature:   
\begin{align*}
    \SO(n,1)/\SO(n) & =  H^n   &  \textrm{(real) hyperbolic space}  \\
    \SU(n,1)/\mathrm{S}( \mathrm{U}(n) \times \mathrm{U}(1) ) & = \C H^n  &   \textrm{complex hyperbolic space}  \\
    \Sp(n,1)/(\Sp(n) \times  \Sp(1)) & =  \H H ^n   &   \textrm{quaternionic hyperbolic space}  \\
    \mathrm{F}^{-}_4/\Spin(9) & =  \O H^2  &   \textrm{Cayley hyperbolic plane}
\end{align*}


The results of this paper can be presented in the following table, where ``diagonal'' means that there exist (or do not exist) local coordinates, in which the metric tensor is diagonal, and ``separable'' means that there exist (or do not exist) local coordinates, in which the geodesic equation and the Laplacian are separable, see definitions in Section \ref{sec:separation}. In the table below, we assume that $n\ge 2$.
\begin{center}
\begin{tabular}{ c |  c | c }
        & diagonal & separable  \\ \hline
 $ S^n$    & yes & orthogonal \cite[Theorem 3.3]{Kalnins_book} \\ 
  $H^n$  &   yes & orthogonal \cite[Theorem 5.1]{Kalnins_book} \\ 
 $\C P^n$ & no \cite[Props~3.1, 3.2]{gauduchon2020non} & non-orthogonal, $n$ ignorable variables (Theorem~\ref{thm:cpn1})\\  
  $\C H ^n$ & no (Theorem~\ref{th:nondiag}) & non-orthogonal, $n$ ignorable variables (Theorem~\ref{thm:cpn1})\\  
 $\H P^n$ & no \cite[Prop 4.1]{gauduchon2020non} & no   (Theorem~\ref{thm:3})     \\
$ \H H ^n$,  $\O P^2$, $\O H^2$   &    no (Theorem~\ref{th:nondiag}) & no (Theorem~\ref{thm:3}) \\ 
\end{tabular}
\end{center}

In more detail, we  study $\C P^n, $ $\C H ^n$, $\H  P^n$, $ \H H ^n$, $\O P^2$  and 
$\O H^2$ from the viewpoint of local existence of diagonal and separating coordinates. The nonexistence of diagonal coordinate systems on  $\C P^n $  and $\H  P^n$   was established in \cite{gauduchon2020non}.
We generalize these results to $\C H ^n$,  $ \H H ^n$,   $\O P^2$ and  $\O H^2$ in Theorem~\ref{th:nondiag}. The proof for $\C H ^n$  and  $ \H H ^n$ follows the same ideas as in \cite{gauduchon2020non}.  

We next proceed to the study of separation of variables on these spaces; see Section~\ref{sec:separation} for definitions. The nonexistence of orthogonal separating coordinates follows from the nonexistence of diagonal coordinates. On $S^n$ and $H^n$ non-orthogonal separation coordinates exist, but they can always be transformed into orthogonal ones. We show in Theorem~\ref{thm:3}   that for  $n\ge 2$, the spaces   $\H  P^n$,   $\H  H^n$,  $\O P^2$  and  $\O H^2$  admit no separation of variables. 

Non-orthogonal separating variables exist on $\C P^n$ and on $\C H^n$, see  \cite{BKW1983,BKW1985,ORW1993,WOR1994} where   several families  of  examples have been discussed. All these examples have precisely $n$ ignorable coordinates. On the other hand, all separating coordinate systems on  $\C P^n$ with $n$ ignorable coordinates have been classified  in~\cite{BKW1985}.  Moreover, it was shown in \cite{BKW1985} that on $\C P^2$, any separation of variables has two ignorable coordinates, which completes the classification of separations of variables for this space.   
 
Non-orthogonal separating coordinates on $\C H^2$ with two ignorable coordinates were explicitly constructed in \cite{BKW1983}. Non-orthogonal separating coordinates on $\C H^3$ having three ignorable coordinates were explicitly constructed in \cite{WOR1994}. In general, the picture for the space $\C H^n$ is substantially more complicated than that for its compact dual $\C P^n$, since the isometry algebra $\mathfrak{su}(n,1)$ contains $n+2$ pairwise non-conjugate abelian subalgebras of dimension $n$ (not just a Cartan subalgebra!), see~\cite[Theorem~5.1]{ORWZ1990}.  
To describe separating coordinate systems with $n$ ignorable coordinates, one has to study whether a given $n$-dimensional abelian subalgebra of $\mathfrak{su}(n,1)$ indeed generates ignorable coordinates in a non-orthogonal separation of variables, and if so, which ones. This is quite a nontrivial task which has been completed only for $n=2, 3$ \cite{WOR1994}. Two out of $n+2$ pairwise non-conjugate $n$-dimensional abelian subalgebras of $\mathfrak{su}(n,1)$ are Cartan and are relatively easy to handle, for all $n \ge 2$. However, the remaining $n$ have been treated in full detail only for $n=2,3$, and the complete description for $n=3$ obtained in \cite{WOR1994} is already quite involved. 

We show in Theorem \ref{thm:cpn1}  that every separating coordinate system on $\C P^n$ and on $\C H^n$ has precisely $n$ ignorable coordinates (as we mentioned above, for  $\C  P^2$, this fact is established in \cite[\textsection~6B]{BKW1985}). This implies that the classifications of separations of variables on $\C P^n$ and on $\C H^2$ and $\C H^3$ presented in   \cite{BKW1985,ORW1993,WOR1994} are complete.

Let us now comment on why we have chosen symmetric spaces for our investigation. It is known since St\"ackel \cite{Staeckel1893,Staeckel1897} and Levi-Civita \cite{Levi-Civita1904} that separating coordinates are closely related to Killing tensors of order one  and two. Symmetric spaces of rank~1 have  a large algebra of Killing tensors of order one (of Killing vector fields) and also a large algebra of Killing tensors of order two, and so they are in a certain sense natural candidates for the existence of separating coordinates. We also hope that the methods developed in this paper, combined with those from \cite{gauduchon2020non}, will allow to study separating and diagonal coordinates on symmetric spaces of higher rank. We would like to emphasize, that by \cite{MN2024}, not all Killing tensors of order two on $\H P^n$ and $\O P^2$ are quadratic polynomials of the Killing vector fields, and so purely algebraic methods to study separation of variables using universal enveloping algebra will not be in general sufficient. We also note that by the results of \cite{MN2025}, the study of Killing tensors of reducible symmetric spaces is reduced to that of irreducible components, which may substantially facilitate the study of separating coordinates. 

Geodesics of rank~1 symmetric spaces are completely and explicitly described in e.g.~\cite{Besse}, in particular in the compact case they are closed and have the same length. The interest in separation of variables goes way beyond the description of the solution of the geodesic equations. In particular, in irreducible symmetric spaces the Eisenhart-Robertson condition (see e.g., \cite[Sec.~2]{Eisenhart1934}) is automatically fulfilled so that the Helmholtz equation also separates in the separating coordinate systems. In addition, it is easy to introduce potential energy in the picture, such that the corresponding Hamiltonian system and the corresponding Helmholz equation still separate. 

Separations of variables was studied and used since the middle of the 19th century. The problem of describing and classifying, up to isometries, all separations of variables in the space forms has been solved by Eisenhart \cite{Eisenhart1934} in small dimensions and under certain nondegeneracy assumptions. The solution for Riemannian space forms completed in \cite{Kalnins_book}. However, the pseudo-Riemannian case is still open: while the orthogonal case has been completed in \cite{BKM2025, KMR1984}, the description of non-orthogonal separations for constant curvature metrics of indefinite signature is unknown. The present paper solves the natural analog of the Eisenhart problem for all rank~1 symmetric spaces except for $\C H ^n $ with $n\ge 4$, and reduces the remaining cases to those which can be solved with computer algebra.  

\subsubsection*{Acknowledgements. } A.\,B. was supported by the Ministry of Science and Higher Education of the Republic of Kazakhstan (grant No. AP23483476). V.\,M.\ was supported by the DFG (projects 455806247 and 529233771) and by the Friedrich Schiller University Jena. 
H.\,D., V.\,M. and Y.\,N. were supported  by the ARC  Discovery grant DP210100951.  Most results were obtained during a research visit of A.\,B. and V.\,M. to La Trobe University, Melbourne, and to the Sydney Mathematical Research Institute (SMRI) supported by the SMRI International Visitor Program. The research project was initiated and first results were obtained during the Minor Program at the MATRIX Research Institute, Australia  in which all four authors  participated. 

We are very grateful to  C. Chanu and G. Rastelli for valuable comments and for giving us useful references on non-orthogonal separation of variables. 



\section{Separation of variables} \label{sec:separation}
\weg{
The concept of separation of variables plays a fundamental role in mathematical physics and is often introduced to students in their first year of university. 
The method enables the reduction of some partial differential equations into independent one-dimensional ordinary differential equations 
facilitating their solution and analysis.}

In this paper, we study separation of variables for the geodesic equations 
and adopt the formal definition of separation of variables which has been already introduced by Levi-Civita \cite{Levi-Civita1904}.
The geodesic flow with metric $g_{ij}$ is described through the Hamiltonian $H(x,p)=\tfrac12 \sum g^{ij}(x) p_i p_j$ on the cotangent bundle $T^*M$.
By separation of variables on an $N$-dimensional manifold  $M^N $
we understand the existence  of a  function $W(x_1,\dots ,x_N, c_1,\dots,c_N)$ of $2 N$  variables 
such that the following conditions are fulfilled:
\begin{enumerate}[label=(\alph*),ref=\alph*]
    \item \label{Wns} 
    The $N\times N$-matrix $\tfrac{\partial^2 W}{\partial c_i \partial x_j}$ is non-singular. 

    \item \label{Hc1} 
    $H(x,\tfrac{\partial W}{\partial x}) = c_1$  (the Hamilton-Jacobi equation).
    
    \item \label{Wi}
    $W(x,c)= \sum_{i=1}^N W_i(x_i,c_1,\dots, c_N)$ (where the function $W_i$ depends on $x_i$ and $c$ only). 
\end{enumerate}

It is well known, see e.g. \cite[\textsection 1]{Kalnins_book}, that the existence of a function $W$ satisfying conditions~\eqref{Wns} and~\eqref{Hc1} (such a function is called a generating function)  allows one to construct a local  coordinate system $(c_1,\dots,c_N, Q_1,\dots,Q_N)$  on the cotangent bundle  $T^*M$ such that $c_1= H(x,p)$ and such that in this coordinate system,  the standard symplectic form  $\sum_{i=1}^N dp_i\wedge dx_i$ has the ``canonical''  form $\sum_{i=1}^N dc_i \wedge d Q_i $.  Indeed, consider the following two local mappings: 
 \begin{equation} \label{eq:2}  \begin{array}{cc}  \phi: \mathbb{R}^{2N}(x,c) \to \mathbb{R}^{2N}(x,p) , & \phi(x,c) =\left(x, \tfrac{\partial W}{\partial x}\right) \\ 
\psi: \mathbb{R}^{2N}(x,c) \to \mathbb{R}^{2N}(Q,c), & \psi(x,c) =\left(\tfrac{\partial W}{\partial c}, c\right).  \end{array}\end{equation} 
By~\eqref{Wns}, the mappings $\phi$, $\psi$, and therefore $\psi \circ \phi^{-1} $, are local  diffeomorphisms. 
In view of the equation 
$$
dW= \sum_i \tfrac{\partial W}{\partial x_i} dx_i +  \sum_i \tfrac{\partial W}{\partial c_i} dc_i = \left( \textrm{after applying $\phi, \psi$} \right)=  \sum_i p_i dx_i + \sum_i Q_idc_i,  
$$
the equation $0=d(dW)$ is equivalent to  $\sum_i d(p_idx_i)= \sum_i d(c_idQ_i)$, as claimed.     
As $H=c_1$ by~\eqref{Hc1}, the Hamiltonian system generated by $H$ looks extremely simple in coordinates $(c, Q)$  and its general solution is given by 
$(c(t), Q(t))= ( \const_1, \const_2,\dots,\const_N, {  \textrm{Const}_1- t},\textrm{Const}_2,\dots,     \textrm{Const}_N).$

Unfortunately, a function $W$ satisfying (\ref{Wns}, \ref{Hc1}, \ref{Wi}) exists not for many coordinate system; moreover, for most metrics, the required coordinates do not exist at all.  Finding coordinates $x_1,\dots,x_N$, for a given metric for which there exists a function $W$ satisfying (\ref{Wns}, \ref{Hc1}, \ref{Wi}) is a nontrivial task. Such coordinates are called {\it separating coordinates} or {\it separating variables} in our paper, and we study their existence and construction for symmetric spaces of rank~1. Probably, the only effective way to find such coordinates, which we follow, is based on the relation of the separating coordinates to Killing tensors of order one and two due to St\"{a}ckel \cite{Staeckel1893,Staeckel1897} and Levi-Civita \cite{Levi-Civita1904}.

First observe that the coordinates $c_1,\dots,c_N$ viewed as functions on $T^*M$ are functionally independent and Poisson-commute. These functions are called {\it separation constants}.  It is known that they are necessarily either linear or quadratic  in momenta $p_1,\dots,p_N$, and so they correspond to Killing tensors of order one or two. A necessary and sufficient condition for a system of $r$ Killing vectors and $N-r$ Killing tensors on a Riemannian manifold to correspond to a separation of variables is given in the following theorem.
 
\begin{theorem}[follows from Theorem 2.7 of \cite{BF1980} or \textsection 7 in \cite{Benenti1990},  Theorem 4 in \cite{KalninsMiller1981} and \cite{agafonov}, see also \cite{BCR2001, BCR2002}] \label{th:ben} 
Let $M^N$ be a Riemannian manifold equipped with a set of $r$ Killing vector fields $V_{N-r+1},\dots, V_N$ and  $N-r$ quadratic Killing tensor fields $\overset{1}{K},\dots,\overset{N-r}{K}$. There locally exists a  separating coordinate system $(x_1,\dots,x_{N-r}, t_{N-r+1}, \dots, t_N)$  on $M^N$ such that the separation constants $c_1,\dots,c_N$  are the linear and the quadratic in momenta functions corresponding to the given Killing vectors and Killing tensors if and only if  the following properties hold:

\begin{enumerate}[label=\Roman*.]
\item  \label{1}  The linear and the quadratic in momenta functions corresponding to the Killing vectors and the Killing tensors, respectively, Poisson commute and are functionally independent. 

\item  \label{2} One of the Killing tensors of order two is the metric tensor.

\item   \label{3} Locally, the quadratic Killing tensors admit $N-r$ mutually orthogonal eigenvector fields $X_i$, $\ i = 1,\dots, N-r$, which are orthogonal to all Killing vector fields $V_{N-r+1},\dots, V_N$.  
\end{enumerate}

Moreover, in the separating coordinate system, the 
Killing vector fields $V_j$ are constant linear combinations of the  vector fields  $\partial_{t_i}, \ i= N-r+1,\dots,N$,  and 
the separation constants $c_1,\dots,c_{N-r}$ corresponding to the quadratic Killing tensors are given by 
\begin{equation} \label{eq:metric}
    c_\ell =   \sum_{\alpha,\beta=1}^{N-r} \overset{\ell}{{k}^{\alpha\beta}}(x_1,..\dots,x_{N-r})p_\alpha p_\beta 
       + \sum_{i,j=N-r+1}^N \overset{\ell}{{h}^{ij}}(x_1,\dots,x_{N-r}) \, p_i p_j, 
    \end{equation}
    and the Hamiltonian of the geodesic flow is $c_1/2$. 
For any $i,j \in {N-r+1},\dots,N$,  the functions 
 $\sum_{\alpha\beta=1}^{N-r} \overset{\ell}{k^{\alpha\beta}} p_\alpha p_\beta + \overset{\ell}{h^{ij}}$,  $\ell=1,\dots,N-r$, viewed as function 
 on  the cotangent bundle to $N-r$-dimensional manifolds  with  local coordinates  $x_1,\dots, x_{N-r}$, are given by the St\"ackel formula. 
\end{theorem}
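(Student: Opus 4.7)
The plan is to prove both implications of the equivalence, using the canonical transformation \eqref{eq:2} in one direction and an explicit construction of coordinates in the other.

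For the forward direction (existence of separating coordinates implies I--III), I would start from the canonical transformation $\psi \circ \phi^{-1}$ produced by the generating function $W$. In the new Darboux coordinates $(c_1,\dots,c_N,Q_1,\dots,Q_N)$, the separation constants $c_i$ are coordinate functions, hence they Poisson-commute and are functionally independent. Since $H=c_1$, one of them comes from the metric, giving II. To get I in full (that each $c_\ell$ comes from a Killing object of order $1$ or $2$), I would invoke the classical Levi-Civita--Eisenhart argument: differentiating the additively separated Hamilton--Jacobi equation in $c_\ell$ and using \eqref{Wi} shows that $c_\ell$ is a polynomial of degree $\le 2$ in the momenta, and Poisson commutativity with $H$ then forces its coefficient tensors to be Killing. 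The ignorable variables $t_j$ correspond to the $c_\ell$ that are linear in momenta (the Killing vectors); the remaining $c_\ell$ are quadratic (the Killing tensors). Finally, for III, the very form \eqref{eq:metric} of the $c_\ell$ in separating coordinates shows that each quadratic Killing tensor is diagonal in the frame $\partial_{x_1},\dots,\partial_{x_{N-r}}$ in the ``non-ignorable'' block, and these directions are orthogonal to all the $\partial_{t_j}$'s.

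For the converse direction (I--III imply the existence of separating coordinates), I would construct the coordinates in two stages. Since the Killing vectors $V_{N-r+1},\dots,V_N$ Poisson-commute as functions linear in $p$, they commute as vector fields; since they are linearly independent and orthogonal to the $X_i$, they span a transverse abelian distribution that can be simultaneously rectified to coordinate fields $\partial_{t_j}$. Next, the common eigenvector fields $X_1,\dots,X_{N-r}$ span a distribution of the orthogonal complement; using that they are orthogonal eigenvectors of a family of Killing tensors satisfying I, the classical Eisenhart/Benenti integrability criterion (equivalent to the vanishing of the Nijenhuis torsion of any non-degenerate linear combination of the $\overset{\ell}{K}$'s restricted to this distribution) implies that each $X_i$ generates an integrable $1$-dimensional foliation, and that there exist coordinates $x_1,\dots,x_{N-r}$ with $X_i \parallel \partial_{x_i}$. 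In these combined coordinates $(x_1,\dots,x_{N-r},t_{N-r+1},\dots,t_N)$, the metric splits into an orthogonal $x$-block plus a $t$-block whose coefficients depend only on $x$, and each $\overset{\ell}{K}$ takes the block form \eqref{eq:metric}.

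The final step is to read off the Stäckel structure and produce $W$. Here one uses that Poisson commutativity of the quadratic Killing tensors on the $(N-r)$-dimensional leaf, expressed in the $x$-coordinates with the $p_i$ ($i>N-r$) treated as parameters, forces the coefficient matrices $\overset{\ell}{k^{\alpha\beta}}(x)$ to be simultaneously diagonal and their diagonals to form a Stäckel matrix; the $\overset{\ell}{h^{ij}}(x)$ are then determined algebraically from this Stäckel matrix, yielding the announced formula for $c_\ell$ and the additively separated generating function $W=\sum_\alpha W_\alpha(x_\alpha,c) + \sum_j t_j\, (\text{linear combination of }c)$. The main obstacle, as in all proofs of this classical theorem, is justifying the passage from the common-eigenvector condition III and Poisson commutativity to the integrability of the $X_i$-distributions and the Stäckel form of the coefficients; this is exactly where one invokes the results of \cite{BF1980,Benenti1990,KalninsMiller1981,agafonov}, and I would cite these rather than reprove the Stäckel-matrix characterization from scratch.
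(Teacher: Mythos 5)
The paper gives no proof of this theorem: it is imported wholesale from the cited literature (Benenti, Kalnins--Miller, Agafonov, et al.), so there is no in-paper argument to compare against. Your sketch is a correct outline of the standard two-directional argument --- canonical transformation and the Levi-Civita degree-$\le 2$ argument for the forward direction, rectification of the commuting Killing vectors plus the eigendistribution/St\"ackel analysis for the converse --- and at the one genuinely hard step (passing from conditions I--III to integrability of the eigendistributions and the St\"ackel form of the coefficients) you defer to exactly the same references, which is in effect the approach the paper itself takes. One small correction: the torsion whose vanishing governs the integrability of the eigendistributions of a Killing tensor is the Haantjes torsion, not the Nijenhuis torsion (the paper uses the former in its concluding section); since you delegate that step to the references anyway, this slip does not damage the argument.
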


The coordinates $t_{N-r+1},\dots, t_N$ from Theorem~\ref{th:ben}  are called {\it ignorable coordinates}.

Although it is well known, let us recall the St\"ackel formula
following  \cite{Eisenhart1934, disser}. Take a  non-singular $(N-r)\times (N-r)$ matrix $S= (S_{ij})$  with $S_{ij}$ being a
function  of the $i$-th variable $x_i$ only. Next, consider the functions $c_\alpha$, $\alpha =1,\dots, N-r$,  given by the following system of linear equations
 \begin{equation} 
 \label{eq:St_intro}
S { C}  = {P},  
\end{equation} 
where  ${C}= \left(c_1, c_2,\dots ,c_{N-r}\right)^\top$ and $ {P}= \left(f_1(x_1)p_1^2 + \phi_1(x_1), f_2(x_2)p_2^2 + \phi_2(x_2),\dots ,f_{N-r}(x_{N-r})p_{N-r}^2 + \phi_{N-r}(x_{N-r})\right)^\top$.  It is known that the functions $c_i$ are in involution, and  the coordinates $x_i$ are separating for the Hamiltonain system corresponding to  any of them, or even  to all of them together.   

 Theorem \ref{th:ben} can be understood, in the Riemannian case,  as an equivalent reformulation of the definition of separating coordinates and of  their existence. 
 Though this reformulation is sufficient for us,  we give the formula for the functions $W_i$, $i =1,\dots, N-r$, as
 $$
 W_i(x_i, c_1,\dots, c_N)= \pm \int_0^{x^i}  \sqrt{    \tfrac{1}{f_i(\xi)}\left( - \phi_i(\xi) + \sum^{N-r}_{\alpha=1} S_{i \alpha}(\xi) c_\alpha\right) } d\xi,  
 $$
and for $i=N-r +1,\dots, N$, we take $W_i= c_i$.

\begin{remark}
   Theorem~\ref{th:ben} is shorter and is visually different from that stated, e.g., in \cite{Benenti1990}   and  \cite{KalninsMiller1981}. This difference is due to the assumption on the metric being positive definite, which prohibits the coordinates of the second class in the terminology of Benenti.  Moreover, it uses a small improvement obtained in \cite{agafonov}.  
\end{remark}

\begin{remark} \label{rem:fromben}
From Theorem~\ref{th:ben} we easily obtain the following:
\begin{itemize}
    \item 
    If a Riemannian manifold admits separation of variables such that none of the separation constants $c_1,\dots , c_N$, correspond to Killing vector fields, i.e., $r=0$ in the notation of Theorem \ref{th:ben},   then it locally admits an orthogonal coordinate system.

\item  In the coordinates $(x_1,\dots,x_{N-r},t_{N-r+1},\dots,t_N)$, 
from Theorem \ref{th:ben}, the submanifolds corresponding to the coordinates $x_1,\dots,x_{N-r}$ are totally geodesic, and are  orthogonal at every point to the pairwise commuting Killing vector fields  $\tfrac{\partial }{\partial t_{N-r+1}},\dots,\tfrac{\partial }{\partial t_{N}}$.  The metric is given by 
    \begin{equation} \label{eq:coords}
        ds^2 = \sum_{\alpha, \beta= 1}^{N-r} g_{\alpha \beta }(x) \, dx_\alpha dx_\beta + \sum_{i,j=N-r+1}^{N} h_{ij}(x) \, dt_i dt_j.
    \end{equation}
\weg{\item 
    If a Riemannian manifold admits a non-orthogonal separation of variables, then there is a local coordinate system $(x, t)$, where $x = (x^1, \dots, x^m), \, t = (t^1, \dots, t^\ell), \; m, \ell \ge 1$, such that the metric is given by 
    \begin{equation} \label{eq:coords}
        ds^2 = \sum_{\alpha, \beta= 1}^m g_{\alpha \beta }(x) \, dx_\alpha dx_\beta + \sum_{i,j=1}^{\ell} h_{ij}(x) \, dt_i dt_j.
    \end{equation}
    One easily sees that the submanifolds $t = \mathrm{const}$ are totally geodesic, with $x^i$ being orthogonal coordinates on them. The submanifolds $x = \mathrm{const}$ are flat (but not totally geodesic), and the vector fields $\tfrac{\partial}{\partial t_i}$ are pairwise commuting Killing vector fields.}
\end{itemize}
\end{remark}

\begin{remark} \label{rem:3} A lot of  research related to separation of variables on $\C P^n$ and $\C H^n$,   see e.g. \cite{WOR1994, ORW1993}, 
was done in the context of superintegrability and multi-separation of variables, see, e.g., \cite{KKM18}.  In smaller dimensions, after symplectic reduction with respect to ignorable coordinates, the geodesic flow of the metric and the corresponding Killing tensors produce a superintegrable system on the space whose commuting integrals correspond to the first block of \eqref{eq:metric}  with potentials which are essentially the components of the second block of   \eqref{eq:metric}.     We comment on this in Section~\ref{sec:con}.  
\end{remark}

\section{Nonexistence of separating coordinates on 
 \texorpdfstring{ $\H  P^n$}{HP\unichar{"006E}}, \texorpdfstring{ $\H  H^n$}{HP\unichar{"006E}}, \texorpdfstring{$\O P^2$}{OP\unichar{"00B2}}  and 
  \texorpdfstring{$\O H^2$}{OH\unichar{"00B2}} }

We start with the following fact, a substantial part of which follows from the work of Gauduchon and Moroianu~\cite{gauduchon2020non}.
\begin{theorem} \label{th:nondiag}
    A rank~1 Riemannian symmetric space locally admits a coordinate system in which the metric has diagonal form if and only if it has constant curvature.
\end{theorem}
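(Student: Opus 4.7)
The ``if'' direction is elementary: the space forms $S^n$, $\R^n$, and $H^n$ all admit standard orthogonal coordinate systems (geodesic spherical, Cartesian, horospherical), in which the metric is manifestly diagonal.

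For the converse, by the classification of rank~1 Riemannian symmetric spaces recalled in the introduction, it suffices to exclude the existence of diagonal coordinates on the six non-constant-curvature cases $\C P^n, \C H^n, \H P^n, \H H^n$ (with $n\ge 2$), $\O P^2$, and $\O H^2$. The compact cases $\C P^n$ and $\H P^n$ are proved in \cite{gauduchon2020non}; my plan is to reduce the remaining four cases to those.

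The noncompact duals $\C H^n$ and $\H H^n$ are the easier half: their Riemann tensors differ from those of $\C P^n$ and $\H P^n$ only by an overall change of sign (after a canonical rescaling). The Gauduchon--Moroianu obstruction for the compact case is an algebraic-differential inconsistency among the curvature components in the hypothetical diagonal coordinate frame. I would reproduce their derivation step by step, verifying that the identities used are invariant under the sign flip $R \mapsto -R$ (entering either at even order in the curvature, or on both sides of a vanishing equation at matching orders). This should be essentially a bookkeeping exercise.

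The octonionic cases $\O P^2$ and $\O H^2$ require a new ingredient. The starting observation is the classical consequence of diagonality: if $g = \sum_i H_i^2\,dx_i^2$, then a routine Christoffel-symbol calculation shows $\Gamma^c_{ab}=0$ for all pairwise distinct $a,b,c$, and hence in the orthonormal frame $e_i = H_i^{-1}\partial_i$ one has $R(e_i,e_j,e_k,e_l) = 0$ whenever $i,j,k,l$ are all distinct; further Lam\'e-type identities constrain the mixed components as well. Since every point of $\O P^2$ (respectively $\O H^2$) lies in a totally geodesic submanifold isometric to $\H P^2$ (respectively $\H H^2$), Gauss's equation implies that on tangent 8-planes to such submanifolds the ambient Riemann tensor equals the intrinsic one. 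If eight of the ambient frame vectors can be arranged to span $T_pN$ for some totally geodesic $N\cong \H P^2$ through $p$, the quaternionic obstruction of \cite{gauduchon2020non} is then inherited by the restricted frame on $N$. An alternative, less indirect route is to mimic the Gauduchon--Moroianu argument directly on $\O P^2$ using the $\mathrm{Spin}(9)$-reduction of the tangent bundle in place of the quaternion-K\"ahler structure on $T\H P^n$.

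\emph{Main obstacle.} Both approaches meet essentially the same difficulty. The alignment of the hypothetical ambient coordinate frame with a freely chosen totally geodesic $\H P^2$ is not automatic, so the restriction strategy requires a genericity or dimension-count argument exploiting that the $\H P^2$'s through a fixed point form a positive-dimensional $\mathrm{Spin}(9)$-homogeneous family. The direct approach requires adapting the Gauduchon--Moroianu computation to a setting where no global almost-complex or hypercomplex structure is available, replacing it with the considerably more intricate $\mathrm{Spin}(9)$ isotropy representation. Finding the cleanest route between these two strategies is, in my view, the crux of the octonionic case.
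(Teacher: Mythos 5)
Your treatment of the ``if'' direction and of the cases $\C H^n$, $\H H^n$ matches the paper: the authors likewise observe that the Gauduchon--Moroianu arguments use only algebraic properties of the curvature tensor (and, for the complex case, the local behaviour of the complex structure), and therefore transfer almost verbatim to the noncompact duals. No issue there.

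The genuine gap is in the octonionic case, which you explicitly leave unresolved. Your restriction strategy cannot be repaired as stated: a hypothetical diagonal coordinate system on $\O P^2$ gives, at a point $o$, an orthonormal basis $\{e_i\}$ of $T_o\O P^2$ with $R(e_i,e_j,e_k,e_l)=0$ for pairwise distinct indices, but there is no reason for any eight of the $e_i$ to span a quaternionic $8$-plane (a Lie triple system tangent to a totally geodesic $\H P^2$); such $8$-planes form a single $\Spin(9)$-orbit of small dimension in the Grassmannian of $8$-planes in $\O^2$, so the genericity count goes the wrong way, exactly as you suspect. What you are missing is that no such detour --- and no adaptation of the full Gauduchon--Moroianu machinery to the $\Spin(9)$ setting --- is needed. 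The single pointwise condition $R(e_1,e_2,Z,W)=0$ for all $Z,W\perp e_1,e_2$, combined with the explicit Brown--Gray formula for the curvature tensor of $\O P^2$ on $T_o\O P^2\cong\O^2$, already yields a contradiction: use transitivity of the isotropy group $\Spin(9)$ on the unit sphere to normalize $e_1=(1,0)$, write $e_2=(y_1,y_2)$ with $y_1\perp 1$, and the vanishing condition reduces to $\<y_2 z_2^*,w_1\> +\<2y_1z_2-z_1^*y_2,w_2\> = 0$ for all admissible $Z=(z_1,z_2)$, $W=(w_1,w_2)$. Elementary octonion algebra (injectivity of multiplication by a nonzero octonion, together with the freedom to choose $z_2$ and $w_2$ in $7$- and $8$-dimensional spaces) first forces $y_2=0$ and then $y_1=0$, contradicting $|e_2|=1$. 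The case of $\O H^2$ follows at once since its curvature tensor differs from that of $\O P^2$ only by an overall sign. So the ``crux'' you identify dissolves once one commits to a direct pointwise computation with the explicit curvature formula rather than trying to import the quaternionic result or rebuild the compact-case argument wholesale.
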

\begin{proof}
    The `if' part is well known. We only need to establish the `only if' claim.

    By~\cite[Propositions~3.1, 3.2, 4.1]{gauduchon2020non}, no local diagonal metric exists on either $\C P^n$ or $\H P^n$ for $n > 1$. The proofs in~\cite{gauduchon2020non} only use the algebraic properties of the curvature tensor and the local behavior of the complex structure (for $\C P^2$), and so almost verbatim work for the non-compact spaces $\C H^n$ and $\H H^n$ with $n > 1$.  

    Suppose that there is a local orthogonal coordinate system $x^i, \, i = 1, \dots, 16$, on $\O P^2$ in which the metric has a diagonal form. Relative to such a coordinate system, the components of the curvature tensor satisfy the property $R_{ijkl}=0$, for all pairwise non-equal $i,j,k$ and $l$. At a point $o \in \O P^2$, define an orthonormal basis $\{e_i\}$ for $T_o \O P^2$ such that $e_i$ is a multiple of $\partial/\partial x^i$, for $i = 1, \dots, 16$. Then $R(e_1,e_2,Z,W)=0$, for all $Z,W \in T_o \O P^2$ such that $Z, W \perp e_1, e_2$. 

We can identify $T_o \O P^2$ with the ($\mathbb{R}$-)\,linear space $\O^2$ via a linear isometry, so that a vector $X \in T_o \O P^2$ is represented as $X=(x_1,x_2),\, x_1, x_2 \in \O$. Under this identification, the curvature tensor of $\O P^2$ of sectional curvature between $1$ and $4$ is given in \cite[Equation~6.12]{BG}. For $X=(x_1, x_2), \, Y=(y_1, y_2), \, Z=(z_1, z_2) \in T_o\O P^2 = \O^2$ we have:
\begin{multline*}
R(X,Y)Z=
(4\<x_1,z_1\>y_1-4\<y_1,z_1\>x_1-(z_1y_2)x_2^*+(z_1x_2)y_2^*-(x_1y_2-y_1x_2)z_2^*,\\
4\<x_2,z_2\>y_2-4\<y_2,z_2\>x_2-x_1^*(y_1z_2)+y_1^*(x_1z_2)+z_1^*(x_1y_2-y_1x_2)),
\end{multline*}
where ${}^*$ is the octonion conjugation and $\<u,v\>= \mathrm{Re} (uv^*)$, for $u, v \in \O$.

As the isotropy group $\Spin(9)$ acts transitively on the unit sphere of $T_x \O P^2$, we can take $e_1 = (1, 0)$, and then $e_2 = (y_1, y_2)$, with $y_1, y_2 \in \O$ and $y_1 \perp 1$. Then for $Z,W \perp e_1, e_2$, the condition $R(e_1,e_2,Z,W)=0$ gives
\begin{equation} \label{eq:octoR}
    \<y_2 z_2^*,w_1\> +\<2y_1z_2-z_1^*y_2,w_2\> = 0,
\end{equation} 
where we used the fact that $y_1 \perp 1$, and so $y_1^* = -y_1$. Take $w_2=0$. Then $w_1 \perp 1, y_1$, and so~\eqref{eq:octoR} gives $y_2 z_2^* \in \mathrm{Span}(1, y_1)$. Assuming $y_2 \ne 0$ we arrive at a contradiction, as the left multiplication by a nonzero octonion is injective and as $z_2$ can be chosen arbitrarily from the $7$-dimensional space $y_2^\perp \cap \O$. It follows that $y_2 = 0$, and then~\eqref{eq:octoR} gives $\<y_1z_2,w_2\> = 0$, again leading to a contradiction, since $y_1 \ne 0$ and as $z_2, w_2 \in \O$ can be chosen arbitrarily.  

The same argument works for the octonion hyperbolic plane $\O H^2$, since under a linear isometry between the tangent spaces to $\O P^2$ and $\O H^2$, their curvature tensors differ only by the sign.
\end{proof}

From Remark~\ref{rem:fromben} and Theorem~\ref{th:nondiag} it easily follows that no rank~1 symmetric space of non-constant curvature admits an \textit{orthogonal} separation of variables.

\medskip

We next address \textit{non-orthogonal} separation of variables and prove the following.
\begin{theorem} \label{thm:3}
    The Riemannian symmetric spaces $\H P^n \, (n \ge 2), \H H^n \, (n \ge 2), \O P^2$ and $\O H^2$ admit no local non-orthogonal separation of variables.
\end{theorem}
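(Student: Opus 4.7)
The plan is to assume for contradiction that $M$ admits a non-orthogonal separation of variables with $r\ge 1$ ignorable coordinates, and to derive a contradiction by combining Remark~\ref{rem:fromben} with Theorem~\ref{th:nondiag} and a bound on abelian subalgebras of the isometry algebra. First I would invoke Remark~\ref{rem:fromben} to produce local coordinates $(x_1,\dots,x_{N-r},t_{N-r+1},\dots,t_N)$ in which the metric has the block form~\eqref{eq:coords}, the level sets $L=\{t=\mathrm{const}\}$ are totally geodesic of dimension $N-r$, and the $\partial_{t_i}$ are $r$ pointwise linearly independent, pairwise commuting Killing vector fields that are pointwise orthogonal to $L$. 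Condition~III of Theorem~\ref{th:ben} further forces the $\partial_{x_\alpha}$ to be mutually orthogonal, so $(x_\alpha)$ are orthogonal coordinates on $L$. As $L$ is totally geodesic in a rank~1 symmetric space it is locally symmetric of rank at most one, and Theorem~\ref{th:nondiag} (whose proof is local and algebraic in the curvature tensor) applies to $L$ and, provided $\dim L\ge 2$, forces $L$ to have constant sectional curvature.

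I would then use the classification of totally geodesic submanifolds of rank~1 symmetric spaces to bound $\dim L$: the constant-curvature ones have dimension at most $\max(n,4)$ in $\H P^n$ and $\H H^n$ (attained by $\R P^n,\R H^n$ and by the quaternionic lines $\H P^1\cong S^4,\H H^1\cong H^4$) and at most $8$ in $\O P^2$ and $\O H^2$ (attained by the octonionic lines $\O P^1\cong S^8,\O H^1\cong H^8$). This yields $r=N-\dim L\ge 4n-\max(n,4)$ in the quaternionic cases and $r\ge 8$ in the octonionic ones. On the other hand the $\partial_{t_i}$ span an $r$-dimensional abelian subalgebra $\mathfrak{a}\subseteq\mathfrak{g}$ of the isometry algebra. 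For the compact spaces $\H P^n$ and $\O P^2$ the algebra $\mathfrak{g}$ is compact, so every abelian subalgebra lies in a Cartan subalgebra, forcing $r\le n+1$ for $\mathfrak{sp}(n+1)$ and $r\le 4$ for $\mathfrak{f}_4$; these bounds contradict the lower bounds above for all $n\ge 2$ and for $\O P^2$.

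For the non-compact spaces $\H H^n$ and $\O H^2$, this strategy must be supplemented because $\mathfrak{sp}(n,1)$ and $\mathfrak{f}_4^-$ admit abelian subalgebras of dimension exceeding the Cartan rank inside their Iwasawa nilradical $\mathfrak{n}=\mathfrak{n}_\alpha+\mathfrak{n}_{2\alpha}$. The plan is to exploit the Heisenberg-type structure of $\mathfrak{n}$: the bracket $[\,\cdot\,,\,\cdot\,]\colon\mathfrak{n}_\alpha\times\mathfrak{n}_\alpha\to\mathfrak{n}_{2\alpha}$ is, up to a scalar, the imaginary part of the $\mathbb{K}$-inner product on $\mathfrak{n}_\alpha\cong\mathbb{K}^{n-1}$, with $\mathbb{K}=\H$ for $\mathfrak{sp}(n,1)$ and $\mathbb{K}=\O$ for $\mathfrak{f}_4^-$. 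Hence every abelian subspace of $\mathfrak{n}_\alpha$ is totally real, of real dimension at most $n-1$ (and at most $1$ when $\mathbb{K}=\O$), which bounds the dimension of an abelian subalgebra inside $\mathfrak{n}$ by $n+2$ for $\mathfrak{sp}(n,1)$ and by $8$ for $\mathfrak{f}_4^-$; small contributions from the centralizer of $\mathfrak{n}$ in $\mathfrak{k}$ do not change the order of magnitude. The few remaining borderline cases would be excluded by combining this algebraic bound with the geometric requirement that every element of $\mathfrak{a}$ evaluate into $\nu L$ at every point of $L$, which, together with the explicit position of $L$ relative to the horospherical foliation generated by $\mathfrak{n}$, forbids such an $\mathfrak{a}$ of the required size.

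The hardest step is this last one: the clean Cartan-rank bound available in the compact setting is unavailable here, and to eliminate the borderline configurations one must combine the Heisenberg-type dimension bound inside $\mathfrak{n}$ with a pointwise geometric analysis of how the abelian subalgebra $\mathfrak{a}$ interacts with the embedding of $L$ at each of its points.
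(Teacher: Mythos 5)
Your argument for the two compact spaces is correct and, for $\H P^n$, genuinely different from (and shorter than) the paper's. The paper pins down the totally geodesic leaf $\Sigma$ by requiring its orthogonal complement to be a Lie triple system as well (forcing $\Sigma=\H P^1$, the complement quaternionic, and then invoking Gray's theorem that quaternionic submanifolds are totally geodesic to contradict flatness of the orbit), whereas you only use that $\Sigma$ has constant curvature, bound $\dim\Sigma\le\max(n,4)$ by Wolf's classification, and play $r\ge 4n-\max(n,4)$ against the rank bound $r\le n+1$ valid for abelian subalgebras of the compact algebra $\mathfrak{sp}(n+1)$. That closes every $n\ge 2$ and is a clean simplification; for $\O P^2$ your count coincides with the paper's.

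For the non-compact spaces the proposal has a genuine gap, and it sits exactly where you flag "borderline cases". For $\H H^2$ your count gives only $r\ge 4$, and $\mathfrak{sp}(2,1)$ \emph{does} contain $4$-dimensional abelian subalgebras (the $3$-dimensional centre $\mathfrak{n}_{2\alpha}\cong\mathrm{Im}\,\H$ of the Iwasawa nilradical plus a real line in $\mathfrak{n}_\alpha$); for $\O H^2$ the count gives $r\ge 8$ and $\mathfrak{f}_4^-$ contains $8$-dimensional abelian subalgebras (the paper exhibits one explicitly). So the dimension count cannot finish these cases, and the sentence about excluding the remainder via "the geometric requirement that every element of $\mathfrak{a}$ evaluate into $\nu L$ at every point of $L$" is a plan, not an argument; moreover your asserted bound $\dim\mathfrak{a}\lesssim n+2$ for \emph{all} abelian subalgebras of $\mathfrak{sp}(n,1)$ (not just those inside $\mathfrak{n}$) is left unproved ("small contributions from the centralizer\dots do not change the order of magnitude"). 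The paper closes these cases by entirely different means: for $\H H^n$ it shows the normal space to $\Sigma$ is itself a Lie triple system, which together with Wolf forces $\Sigma=\H H^1$ and the orbit to be a quaternionic, hence totally geodesic, hence non-flat submanifold (Gray); for $\O H^2$ it uses the additional constraint that the abelian subalgebra $\mathfrak{k}$ must be Killing-orthogonal to $[\mathfrak{m},\mathfrak{m}]$ (Lemma~\ref{l:tgcomplementary}\eqref{it:bracketorth}, borrowed from polar-action theory) and an explicit computation with the bracket of $\mathfrak{f}_4^-=\mathfrak{so}(8)\oplus\O\oplus\O\oplus\O$ to show no such $8$-dimensional abelian subalgebra exists. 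Some structural input of this kind is indispensable; without it your proof establishes the theorem only for $\H P^n$ and $\O P^2$.
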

\begin{proof}
By Theorem~\ref{th:ben} (see also~\eqref{eq:coords}), the local existence of non-orthogonal separation of variables on a Riemannian space $M$ of dimension $N$, implies the local existence of two complementary, orthogonal local foliations on $M$, with the first consisting of totally geodesic submanifolds of dimension $N-r > 0$ admitting a diagonal metric, and the second, consisting of flat submanifolds of dimension $r$ which are orbits of an abelian $r$-dimensional subgroup $K$ of the isometry group of $M$. The picture here is very similar to that for the polar action of the group $K$ on $M$. Recall that a proper action of a group $K$ on a complete, connected Riemannian manifold $M$ is called \textit{polar}, if it admits a \textit{section}, that is, an embedded, totally geodesic submanifold $\Sigma$ which meets all the orbits of $K$, and which intersects all the orbits orthogonally in each of its points (for the modern state of the theory of polar actions, the reader is referred to \cite{DiazRamos}, \cite{DVK} and the bibliographies therein). In our case, all the assumptions are local, and we do not see how we can guarantee the properness of the action of $K$ (or, a priori, even the closedness of $K$). However, we can make use of some results of the theory of polar actions. 

Suppose $M=G/H$ is our symmetric space, being acted upon by an abelian group $K \subset G$, locally admitting a totally geodesic section $\Sigma$ of complementary dimension. Denote $\mathfrak{g}$ and $\mathfrak{k}$ the Lie algebras of $G$ and $K$, respectively. Assume that the point $o \in M$, the projection of the identity of $G$, is a regular point of the action of $K$. Denote $\mathfrak{m} = T_o\Sigma \subset T_oM$, and let $\mathfrak{m}^\perp = T_o(Ko)$ be its orthogonal complement.  

{
\begin{lemma} \label{l:tgcomplementary}
    With the above assumptions and notation, the following holds:
    \begin{enumerate}[label=\emph{(\alph*)},ref=\alph*]
        \item \label{it:lts} 
        The subspace $\mathfrak{m}^\perp \subset T_oM$ is a Lie triple system and hence is tangent to a totally geodesic submanifold of $M$ passing through $o$.
        
        \item \label{it:bracketorth}
        The subspace $[\mathfrak{m}, \mathfrak{m}]$ is orthogonal to $\mathfrak{k}$ relative to the Killing form of $\mathfrak{g}$.
    \end{enumerate}
\end{lemma}
\begin{proof}
    To prove assertion~\eqref{it:lts}, we compute the curvature tensor $R$ of $M$ in the notation of formula~\eqref{eq:coords}. Note that the subspaces $\mathfrak{m}$ and $\mathfrak{m}^\perp$ are spanned, respectively, by $\partial_{x_\alpha}, \, \alpha=1,\dots,N-r$, and by $\partial_{t_i}, \, i=N-r+1,\dots,N$. A direct computation shows that $R(\mathfrak{m}^\perp,\mathfrak{m}^\perp)\mathfrak{m}^\perp \subset \mathfrak{m}^\perp$. It follows that $\mathfrak{m}^\perp$ is a Lie triple system, and hence is tangent to a totally geodesic submanifold of $M$, by Cartan's Theorem~\cite[Theorem~7.2]{Helgason}.

    The proof of assertion~\eqref{it:bracketorth} is verbatim the proof of assertion (ii) of the Proposition in~\cite[p.~195]{Gorodski} --- it only requires local arguments.
\end{proof}
}

Note that the totally geodesic submanifold in Lemma~\ref{l:tgcomplementary}\eqref{it:lts} is \textit{not}, in general, the orbit of $K$, a leaf of the $(N-r)$-dimensional flat complementary foliation. 

\medskip

Assume there is a local non-orthogonal separation of variables on the space $M=\H P^n$ or $M=\H H^n, \, n > 1$. The arguments for both spaces are similar, so let us assume that $M =\H P^n$. By Lemma~\ref{l:tgcomplementary}\eqref{it:lts} and Theorem~\ref{th:nondiag}, the tangent space to $M$ at a regular point of the action of $K$ is the orthogonal sum of two Lie triple systems, one of which is tangent to a totally geodesic submanifold of constant curvature. By~\cite[Theorem~1]{Wolf}, the maximal totally geodesic submanifolds of $\H P^n$ are $\H P^k, \, k<n$, and $\C P^n$, and so the only possible case is that one of the totally geodesic submanifolds is $\Sigma = \H P^1 = S^4$ (and the other one is then $\H P^{n-1}$). This means that $m = 4$, and through every regular point $x$ there passes a flat submanifold $L = Kx \subset M$ orthogonal to the totally geodesic $\H P^1$. But then the tangent space to $L$ at each point is invariant relative to the quaternionic structure, and hence $L \subset M$ is a quaternionic submanifold. By~\cite[Theorem~5]{Gray}, $L$ must be totally geodesic, which contradicts the fact that it is flat. 

\smallskip

The fact that the octonionic projective plane $\O P^2 = \mathrm{F}_4/\Spin(9)$ admits no non-orthogonal separation of variables follows from the dimension count. By~\cite[Theorem~1]{Wolf} the maximal dimension of a proper, totally geodesic submanifold of $\O P^2$ is $8$, while the maximal dimension of an abelian subspace of the algebra $\mathfrak{f}_4$ is  $4$, as any such subspace lies in a Cartan subalgebra of $\mathfrak{f}_4$. 

\smallskip

The same simple argument does not, unfortunately, work for the octonionic hyperbolic plane $\O H^2 = \mathrm{F}^-_4/\mathrm{Spin}(9)$, as the noncompact Lie algebra $\mathfrak{f}^-_4$ admits abelian subalgebras not lying in any Cartan subalgebra, and having the dimension greater than the rank (up to at least $8$; to see this, we note that a solvable group whose Lie algebra is a $1$-dimensional extension of a $2$-step nilpotent $15$-dimensional algebra $\mathfrak{v} \oplus \mathfrak{z}$ with the center $\mathfrak{z}$ of dimension $7$ acts simply transitively on $\O H^2$; a required $8$-dimensional abelian subalgebra of $\mathfrak{f}^-_4$ can be taken as the direct sum of $\mathfrak{z}$ and a line in $\mathfrak{v}$).

By Lemma~\ref{l:tgcomplementary}\eqref{it:lts} and Theorem~\ref{th:nondiag}, the tangent space to $\O H^2$ at a regular point is the orthogonal sum of two Lie triple systems, one of which is tangent to a totally geodesic submanifold of constant curvature. From~\cite[Theorem~1]{Wolf}, the maximal totally geodesic submanifolds of $\O H^2$ are $\H H^2$ and the real hyperbolic space $H^8$, and so the only possible case is that one of the totally geodesic submanifolds is $\Sigma = H^8$ (and the other one is also $H^8$). 

We use the presentation given in~\cite[Section~4]{Kollross}. We have the decomposition $\mathfrak{f}^-_4 = \mathfrak{so}(8) \oplus \O \oplus \O \oplus \O$ into linear subspaces orthogonal relative to the Killing form, and $T_o \O H^2 = \{0\} \oplus \{0\} \oplus \O \oplus \O$. The Lie bracket on $\mathfrak{f}^-_4$ is given by
\begin{equation*}
    [(A, u,v,w),(B,x,y,z)] = (C,r,s,t),
\end{equation*}
where $u,v,w,x,y,z,r,s,t \in \O$ and $A, B, C \in \mathfrak{so}(8)$ such that
\begin{equation} \label{eq:f4bracket}
\begin{split}
    C &= AB - BA - 4 u \wedge x +4 \lambda^2(v \wedge y)+4\lambda(w \wedge z),\\ 
    r &= Ax - Bu - (vz)^*+(yw)^*, \\
    s &= \lambda(A)y - \lambda(B)v +(wx)^* - (zu)^*, \\
    t &=\lambda^2(A)z - \lambda^2(B)w + (uy)^* - (xv)^*,
\end{split}
\end{equation}
where $a \wedge b = ab^{\top} - ba^{\top}$ for $a, b \in \O$, and where $\lambda$ and $\lambda^2$ are the automorphisms of $\mathfrak{so}(8)$ defined by $\lambda(a \wedge b) = \frac12 L_{b^*}L_{a^*}$ and $\lambda^2(a \wedge b) = \frac12 R_{b^*}R_{a^*}$, for $a, b \in \O, \, a \perp 1$, with $L_c$ and $R_c$ being the left and the right multiplications by $c \in \O$, respectively. 

As the isotropy group $\mathrm{Spin}(9)$ acts transitively on the set of the Lie triple systems in $T_o\O H^2$ tangent to totally geodesic hyperbolic spaces $H^8 \subset \O H^2$ (by the uniqueness part of~\cite[Theorem~1]{Wolf}), we can take $\mathfrak{m}= \{(0,0,v,0) \, | \, v \in \O\}$ and $\mathfrak{m}^\perp= \{(0,0,0,w) \, | \, w \in \O\}$. Then $[\mathfrak{m}, \mathfrak{m}] = \{(\lambda^2(v \wedge y),0,0,0) \, | \, v, y \in \O\} = \mathfrak{so}(8) \oplus \{0\} \oplus \{0\} \oplus \{0\}$, as $\lambda$ is an automorphism. We now need an abelian $8$-dimensional subalgebra $\mathfrak{k} \subset \mathfrak{f}^-_4$ whose projection to $T_o \O H^2$ is $\mathfrak{m}^\perp$ and which, according to  Lemma~\ref{l:tgcomplementary}\eqref{it:bracketorth}, is orthogonal to $[\mathfrak{m}, \mathfrak{m}]$. Then $\mathfrak{k} \subset \{(0,u,0,w) \, | \, u, w \in \O\}$. As $\dim \mathfrak{k} = 8$, we can take $U=(0,u,0,1) \in \mathfrak{k}$ and then for any $z \in \O,\ z\perp 1$, there exists $X=(0,x,0,z) \in \mathfrak{k}$. We have $[U,X] =0$, and so by~\eqref{eq:f4bracket} we obtain $x=zu$ and $\lambda(1 \wedge z) = u \wedge x = u \wedge (zu)$. The latter equation gives $\frac12 L_z = u \wedge (zu)$, and so the element $u \in \O$ has the following property: for any $a \in \O$ and any $z \in \O, z \perp 1$, we have $\frac12 za = \<zu,a\> u - \<u,a\>zu$. Taking a non-zero $z \perp 1$ we obtain $a = 2\<zu,a\> z^{-1}u - 2\<u,a\>u$, which is clearly a contradiction, as we can choose $a \in \O$ which does not lie in the real span of $u$ and $z^{-1}u$.
\end{proof}

\section{%
Number of ignorable coordinates in separating coordinates on  \texorpdfstring{$\C P^n$}{CPn} and  \texorpdfstring{$\C H^n$}{CHn}   } \label{sec:cpn}

\weg{The standard metric of $\C P^n$ is the so-called Fubini--Study metric. 
One obtains it as the quotient metric of the standard sphere $ S^{2n+1} \subset \mathbb{R}^{2n+2}= \C^{n+1}$ by the action of the cirlce $ S^1$ defined by
\[
(z_1, \dots, z_{n+1}) \stackrel{\phi}{\mapsto} (e^{i\phi} z_1, \dots, e^{i\phi} z_{n+1})
\]
where $z_j = x_j + i y_j$ and $(x_1, y_1, \dots, x_{n+1}, y_{n+1})$ are coordinates on $\R^{n+2}$.
This is the flow of the Killing vector field corresponding to the complex structure on $\C ^{n+1}$.  The standard metric of $\C H^n$ can be obtained as the quotient of the restriction of the metric $dx_1^2 +dy_1^2+\cdots+dx_n^2 +dy_n^2-dx_{n+1}^2 -dy_{n+1}^2$ from $\R^{2n+2}$ to the hyperboloid $x_1^2 +y_1^2+\cdots +x_n^2 +dy_n^2-x_{n+1}^2 -y_{n+1}^2$ followed by taking the quotient by the same standard action of $ S^1$. }

As we already know, the spaces $\C P^n$ and $\C H^n$ admit no orthogonal separating coordinates (\cite{gauduchon2020non} and Theorem \ref{th:nondiag}). On the other hand, \cite{BKW1985, WOR1994, ORW1993} provide examples of non-orthogonal separating coordinates. See also \cite{schumm} for another construction of the separation of variables on $\C P^n$ and $\C H ^n$. In these examples, relative to the separating coordinates, the metric has the form~\eqref{eq:coords}, with $\sum_{\alpha,\beta} g_{\alpha\beta}(x) dx_\alpha dx_\beta$ being the metric of constant curvature given in ellipsoidal coordinates, and the number of ignorable variables is exactly $n$. We show that these properties hold for any separation of variables on $\C P^n$ and on $\C H^n$. 

 \begin{theorem} \label{thm:cpn1}
   Let $x_1,\dots,x_{2n}$, with $n\ge 2$, be local separating coordinates on $\C  P^n$ or on $\C H ^n$. 
   
   Then precisely $n$ of them are ignorable, that is, in the notation of Theorem \ref{th:ben}, we have  $r=n$; we denote $x_{n+1}=t_1,\dots, x_{2n}=t_n$. The corresponding vector fields $\tfrac{\partial }{\partial t_1},\dots,    \tfrac{\partial }{\partial t_n}$ are Killing and form a maximal abelian subalgebra of $\mathfrak{su}(n+1)$ in the case  of $\C P^n$, and of $\mathfrak{su}(n,1)$ in the case  of  $\C H ^n$.  
   The coordinates $x_1,\dots, x_n$ are separating coordinates for the metric $g$ 
   of constant positive curvature on the totally geodesic submanifold $\R P^n$ in the case of $\C P^n$, and of constant negative curvature on the totally geodesic submanifold $H^n$ in the case of $\C H^n$.
 \end{theorem}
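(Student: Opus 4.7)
The plan is to bound $r$ from both sides using Theorem~\ref{th:ben}, the classification of totally geodesic submanifolds of rank~1 symmetric spaces, and the dimension bound for abelian subalgebras of the isometry algebra. The starting observation is that, by Theorem~\ref{th:ben} and Remark~\ref{rem:fromben}, a separation of variables gives local coordinates $(x_1,\dots,x_{2n-r},t_{2n-r+1},\dots,t_{2n})$ in which the metric has the block form~\eqref{eq:coords}: the leaves $\Sigma=\{t=\mathrm{const}\}$ are totally geodesic of dimension $2n-r$, the induced metric on $\Sigma$ is diagonal in the $x_\alpha$, and the $r$ Killing vector fields $\partial_{t_i}$ span an $r$-dimensional abelian subalgebra $\mathfrak{k}$ of $\mathfrak{su}(n+1)$ (respectively $\mathfrak{su}(n,1)$).

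For the lower bound $r\ge n$, I would observe that $\Sigma$, being a totally geodesic submanifold of a rank~1 symmetric space, is itself a rank~1 symmetric subspace that carries a diagonal coordinate system, so Theorem~\ref{th:nondiag} forces $\Sigma$ to have constant sectional curvature. By the Wolf classification \cite{Wolf} of totally geodesic submanifolds of $\C P^n$ (namely $\C P^k$ and $\R P^k$, with the analogues for $\C H^n$), the only possibilities are $\Sigma\in\{\R P^k,\C P^1\}$ (respectively $\{H^k,\C H^1\}$), whence $\dim\Sigma\le n$, giving $r=2n-\dim\Sigma\ge n$. For the upper bound $r\le n$, abelian subalgebras of $\mathfrak{su}(n+1)$ have dimension at most the rank $n$, and the analogous bound for $\mathfrak{su}(n,1)$ is contained in \cite[Theorem~5.1]{ORWZ1990}. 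Combining, $r=n$, and $\mathfrak{k}$ is a maximal abelian subalgebra as claimed.

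It remains to identify $\Sigma$ and to transfer the separation to it. With $\dim\Sigma=n$, the case $\Sigma=\C P^1$ (resp.\ $\C H^1$) is excluded on dimension grounds as soon as $n\ge 3$, so $\Sigma=\R P^n$ (resp.\ $H^n$). For $n=2$ the residual possibility $\Sigma=\C P^1$ has to be ruled out by an additional argument; for $\C P^2$ this is already done in \cite[\textsection 6B]{BKW1985}, and a uniform alternative is the following: a direct computation in an affine chart shows that the tangent space at a regular point to any orbit of a maximal abelian subalgebra of $\mathfrak{su}(3)$ or $\mathfrak{su}(2,1)$ is generically not $J$-invariant, so its orthogonal complement $\mathfrak{m}=T_o\Sigma$ cannot be a complex line and $\Sigma$ is not a complex submanifold. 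Finally, the assertion that $x_1,\dots,x_n$ form separating coordinates for the induced constant curvature metric on $\Sigma$ is immediate from the St\"ackel block structure in Theorem~\ref{th:ben} restricted to the totally geodesic slice $\Sigma$.

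The main obstacle is precisely the $n=2$ case, where the dimension count alone does not distinguish $\R P^2$ from $\C P^1$; one has to invoke either the $J$-invariance argument sketched above or the prior classification of \cite{BKW1985} to complete the identification. Everything else reduces to bookkeeping on top of Theorem~\ref{th:ben}, Theorem~\ref{th:nondiag}, Wolf's classification, and the dimension bound for abelian subalgebras of the isometry algebra.
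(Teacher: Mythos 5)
Your argument reproduces the paper's proof almost step for step in its main part: the block form \eqref{eq:coords}, constancy of the curvature of $\Sigma$ via Theorem~\ref{th:nondiag}, Wolf's classification to get $\dim\Sigma\le n$, and the rank/abelian-dimension bound (with \cite[Theorem~5.1]{ORWZ1990} for $\mathfrak{su}(n,1)$) to get $r\le n$; combining these is exactly how the paper pins down $r=n$ and reduces everything to excluding $\Sigma=\C P^1\subset\C P^2$ (resp.\ $\C H^1\subset\C H^2$). Where you genuinely diverge is in that exceptional $n=2$ case. The paper invokes Lemma~\ref{l:tgcomplementary}\eqref{it:bracketorth}: the Killing-form orthogonality of $\mathfrak{k}$ to $[\mathfrak{m},\mathfrak{m}]$ together with the condition that $\mathfrak{k}$ projects onto $\mathfrak{m}^\perp$ forces the two generators of $\mathfrak{k}\subset\mathfrak{su}(3)$ into an explicit two-parameter normal form, and a direct computation shows these never commute. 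You instead propose to show that the orbit tangent space $\mathfrak{m}^\perp=T_o(Ko)$ cannot be $J$-invariant, whereas the orthogonal complement of a complex line must be. This route is sound and can even be made cleaner than the paper's: since the isometry group acts by Hamiltonian symplectomorphisms of the K\"ahler form, the orbit of any abelian subalgebra is isotropic ($\omega(X^*,Y^*)=\mu_{[X,Y]}=0$), and an isotropic $2$-plane in a K\"ahler $4$-manifold is never $J$-invariant because $\omega(X,JX)=g(X,X)\neq 0$; this works at every point and for every abelian subalgebra, with no case analysis. As written, however, your step is only an assertion that ``a direct computation in an affine chart shows'' the claim ``generically''; for $\mathfrak{su}(2,1)$ that computation would have to cover all four non-conjugate $2$-dimensional abelian subalgebras, and your fallback citation of \cite[\textsection~6B]{BKW1985} covers only $\C P^2$, not $\C H^2$. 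So either carry out the isotropy argument above or the paper's non-commutativity computation; until then the $n=2$ exclusion is the one unproved link in your chain.
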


Note that \cite{Kalnins_book,KalninsMiller1986, BKM2025}, in which separating coordinates for the sphere and for the hyperbolic spaces have been constructed, provide explicit formulas for the metric $g$. 
\weg{ Theorem \ref{thm:cpn1}  will be later complimented by a description of the second block in \eqref{eq:metric}, and transformation formulas from separating coordinates to standard coordinates on $S^{2n+1}\subset \mathbb{R}^{2n}= \C^{n}$. }

\begin{proof}
    For non-orthogonal separating coordinate system on $\C P^n$  or on $\C H^n$, the metric tensor has the form given in~\eqref{eq:coords}, with the submanifolds $\Sigma$ given by $t_i = \mathrm{const}_i$ being totally geodesic, and of constant curvature, by Theorem \ref{th:nondiag}. Moreover, by Lemma~\ref{l:tgcomplementary}\eqref{it:lts}, at a regular point, the orthogonal complement to the tangent space of $\Sigma$ must be a Lie triple system. By~\cite[Theorem~1]{Wolf}, any totally geodesic submanifold of $\C P^n$ is congruent to either a standard $\C P^{k}$ or a standard $\R P^k$, with $k\le n$. Similarly, any totally geodesic submanifold of $\C H^n$ is congruent to either a standard $\C H^{k}$ or a standard $H^k$, with $k\le n$. Moreover, the dimension of a maximal abelian subalgebra of $\mathfrak{su}(n+1)$ is $n$ (the rank of $\mathfrak{su}(n+1)$), and the dimension of a maximal abelian subalgebra of $\mathfrak{su}(n,1)$ is also $n$, by~\cite[Theorem~5.1]{ORWZ1990}. This leaves only two possibilities in the case of $\C P^n$: either $\Sigma$ is a totally real $\R P^n \subset \C P^n$, or $n=2$ and $\Sigma = \C P^1 \subset \C P^2$. Similarly, for $\C H^n$, either $\Sigma$ is a totally real $H^n \subset \C H^n$, or $n=2$ and $\Sigma = \C H^1 \subset \C H^2$.

    We show that both for $\C P^n$ and for $\C H^n$, only the first alternative is possible. We give a proof in the case of $\C P^n$; for $\C H^n$ it is identical, up to obvious changes.

    Suppose that $n = 2$, and that $\Sigma$ is congruent to $\C P^1$. We take
    \begin{equation*}
        \mathfrak{su}(3) = \left\{ \left(
        \begin{array}{ccc}
              a_1 \ri& z_1 & z_2\\
             -\overline{z}_1  & a_2 \ri & z_3\\
             -\overline{z}_2& -\overline{z}_3 & a_3 \ri
        \end{array}
    \right) \, | \, z_1,z_2,z_3 \in \C, \, a_1, a_2, a_3 \in \R, \, a_1+a_2+a_3=0\right\}.
    \end{equation*}
    In the notation of Lemma~\ref{l:tgcomplementary}, at a regular point $o \in \C P^2$, the subspace $T_o\C P^2$ is given by $z_1=a_1=a_2=a_3=0$, and then the subspaces $\mathfrak{m} \subset T_o\C P^2$ and $\mathfrak{m}^\perp \subset T_o\C P^2$, up to isotropy, are given by $z_3=0$ and by $z_2=0$, respectively. The $2$-dimensional abelian subalgebra $\mathfrak{k} \subset \mathfrak{su}(3)$ tangent to the $t$-coordinates is orthogonal to $[\mathfrak{m},\mathfrak{m}]$, by Lemma~\ref{l:tgcomplementary}\eqref{it:bracketorth}, relative to the Killing form of $\mathfrak{su}(3)$, and the projection of $\mathfrak{k}$ to $T_o\C P^2$ equals $\mathfrak{m}^\perp$. But then $\mathfrak{k}$ is spanned by the following two elements:
    \begin{equation*}
         \left(
        \begin{array}{ccc}
              -s \ri& u & 0\\
             -\overline{u}  & 2s \ri & 1\\
             0& -1 & -s \ri
        \end{array}
    \right) \qquad
         \left(
        \begin{array}{ccc}
              -q \ri& v & 0\\
             -\overline{v}  & 2q \ri & \ri\\
             0& \ri & -q \ri
        \end{array}
    \right) \, ,
    \end{equation*}
    for some $u, v \in \C, \, s, q,  \in \R$, and a direct calculation shows that they do not commute.
%
\end{proof}

\section{Conclusion} 
\label{sec:con}

We  solved (the natural analog of) the Eisenhart problem for certain compact rank~1 symmetric spaces.  In  particular, we have shown that $\H P^n$ with $n\ge 2$  and  $\O P^2$ do not admit local separation of variables, and that on  $\C P^n$, all separating  coordinates are  those constructed in \cite{BKW1985}.  We partially solved the  Eisenhart problem for noncompact rank~1 symmetric spaces: we have shown that $\H H ^n$  with $n\ge 2$ and $\O H^2$   do not admit separation of variables and that on  $\C H^n$, any separating coordinates have $n$ ignorable  coordinates. In view of results of \cite{BKW1983,WOR1994, ORW1993, ORWZ1990}, this solves Eisenhart problem  for $\C H^2$  and $\C H^3$. 
 
An algorithm for classifying possible separating variables on the space $\C H^n$, for any given $n$, is in essence given in Theorem~\ref{th:ben}, and includes the following steps. The classification of $n$-dimensional abelian subalgebras $\mathfrak{k} \subset \mathfrak{su}(n,1)$ is given in~\cite[Theorem~5.1]{ORWZ1990}. For any such subalgebra $\mathfrak{k}$, one first constructs the totally geodesic submanifold $\Sigma$ (which is necessarily isometric to $H^n$) orthogonal to the Killing vector fields corresponding to the subalgebra. Quadratic Killing tensor fields on $\C P^n$ are quadratic forms in the Killing vector fields \cite{Eastwood2023, ST1985}. By duality, this is also true for $\C H^n$, so that the quadratic Killing tensor fields on $\C H^n$ are in one-to-one correspondence with the quadratic forms on $\mathfrak{su}(n,1)$. With some aid of computer algebra, one then finds the subspace of all quadratic Killing tensors Poisson-commuting with $\mathfrak{k}$, and then, $n$-dimensional subspaces of that space consisting of pairwise Poisson-commuting quadratic Killing tensors. Next, for such $n$-dimensional subspaces one needs to verify, if the restrictions of its quadratic Killing tensors to the submanifold $\Sigma$ gives the orthogonal separation of variables on $\Sigma$. That is, one needs to check that they have common eigenspaces, and that their Haantjes torsion is zero. This can be reduced to a certain algebraic calculations, requiring working with Gr\"{o}bner bases.  

We note that the classification of polar actions on $\C H^n$ obtained in~\cite{DVK} could be very useful for finding separating variables.
   
Another natural question to address is the separation of variables on symmetric  spaces of higher rank.  Related geometric questions will include the study of the existence of diagonal coordinates on symmetric spaces (see \cite[\textsection 5]{gauduchon2020non} for a list of related open problems), and also the study of abelian subalgebras of isometry algebras of symmetric spaces such that the orthogonal distribution to the span of the values of the corresponding Killing vector fields at a regular point is integrable (and hence, automatically totally geodesic). 

Additional directions for future research are to explore the relation between separation of variables on $\C H^n $ and superintegrable  and multiseparable systems. Recall that the functions $h^{ij}$ from \eqref{eq:coords} can be viewed as a potential adding which to the kinetic energy corresponding to the $N-r$-dimensional metric $g$ does not destroy the integrability and the separability. In the context of separation of variables on $\C P^n$, the functions $h^{ij}$ are constructed from a Cartan subalgebra of $\mathfrak{su}(n+1)$ only. 
In particular, they provide  a superintegrable and multiseparable system. It is easy to check that this system is actually the so-called nondegenerate superintegrable system on the sphere.  In the case of $\C H^n$, there exists $n+2$ pairwise non-conjugate abelian subalgebras of dimension $n$ of $\mathfrak{su}(n,1)$, and each of them gives a natural analog of nondegenerate superintegrable system on the  (real) hyperbolic space.  They were studied in  details for small $n$  in \cite{WOR1994,ORW1993}, with special attention to Cartan subalgebras,  and we plan to extend this study to all values of $n \ge 2$ and all abelian subalgebras of dimension $n$ of $\mathfrak{su}(n,1)$.

\bibliography{sepsym.bib}
\bibliographystyle{plain}

\end{document}